\providecommand{\U}[1]{\protect\rule{.1in}{.1in}}
\newtheorem{theorem}{Theorem}
\newtheorem{definition}[theorem]{Definition}
\newtheorem{lemma}[theorem]{Lemma}
\newtheorem{remark}[theorem]{Remark}
\newenvironment{proof}[1][Proof]{\noindent\textbf{#1.} }{\ \rule{0.5em}{0.5em}}
\begin{document}

\title{Some characterizations of ergodicity in Riesz spaces}
\author{Youssef Azouzi\thanks{The authors are members of the GOSAEF research group} ,
Marwa Masmoudi.
\and {\small Research Laboratory of Algebra, Topology, Arithmetic, and Order}\\{\small Department of Mathematics}\\COSAEF {\small Faculty of Mathematical, Physical and Natural Sciences of
Tunis}\\{\small Tunis-El Manar University, 2092-El Manar, Tunisia}}
\maketitle

\begin{abstract}
In the recent surge of papers on ergodic theory within Riesz spaces, this
article contributes by introducing enhanced characterizations of ergodicity.
Our work extends and strengthens prior results from both the authors and
Homann, Kuo, and Watson. Specifically, we show that in a conditional
expectation preserving system $(E,T,S,e),$ $S$ can be extended to $L%
{{}^1}%
(T)$ and operates as an isometry on $L^{p}\left(  T\right)  $ spaces.

\end{abstract}

\section{Introduction}

The first paper in ergodic theory in the setting of Riesz spaces has been
developed by Kuo, Labuschagne, and Watson \cite{L-33}. Many important concepts
and results in ergodic theory have been generalized to the Riesz space setting
\cite{L-950,L-724,L-33,L-929,L-797}. Ergodic systems are those for them time
means and spaces means are equal. This definition of ergodicity comes back to
Boltzmann. This concept has many applications in different fields such as
physics in the understanding of the behavior of particles in a gas, as
economics when analyzing the long-term behavior of financial markets
[\cite{a-2404}, \cite{a-2405}], information theory, particularly in the study
of stochastic processes and data compression \cite{a-2403}. A question of
interest is to find conditions on a transformation that imply ergodicity. In
the classical case of probability spaces, several alternative
characterizations of ergodicity were given \cite{b-1662,b-2982,a-1051,b-157}.
In the Riesz spaces setting, the theory has been started in 2007 in
\cite{L-33} where the authors extended Birkhoff, Hopf, and Wiener ergodic
theorems without giving an explicit definition of ergodicity. It was only in
2021 that a concrete definition was introduced (see \cite{L-724}). We recall
from this paper that a 4-uplet $\left(  E,T,e,S\right)  $ is called a
conditional expectation preserving system if $\left(  E,T,e\right)  $ is a
conditional Riesz triple and $S$ is a lattice homomorphism satisfying $TS=T$
and $Se=e.$ The system attains ergodicity when, for every $S$-invariant
element $f$ in $E$, the Cesaro mean $\dfrac{1}{n}%
{\textstyle\sum\limits_{k=0}^{n-1}}
S^{k}f$ converges in order to $Tf$. This faithfully reflects the principle
that time and space means align within classical theory. We will show in this
work that this result can be extended to elements in $L^{1}\left(  T\right)  $
and not only to $S$-invariant elements. This expansion aligns seamlessly with
classical theory. In our recent paper co-authored with Ben Amor, Homann and
Watson \cite{L-950} we employed an alternate equivalent definition of
ergodicity rooted in classical theory stating that $S$-invariant elements are
also $T$-invariant. We will refine this result by showing that validating this
condition for components is sufficient. Let us give a brief overview of the
paper's content.

In Section 2 we provide some preliminaries that will be needed in the
following sections. We view ergodicity within Riesz spaces, highlighting its
correlation with classical cases. Then we provide a new equivalent definition
of ergodicity involving only components. A remarkable significant result
stating that if $\left(  E,T,S,e\right)  $ is a conditional expectation
preservingsystem, then $ST=T$ has not been observed before. Moving to Section
3, we articulate a well-known probabilistic result characterizing ergodicity.
We elucidate the translation from the classical setting
\cite{b-157,b-938,b-3102} to the framework of Riesz spaces. This transition
involves interpreting measurable subset measures as images of components
through the conditional expectation operator $T$. Section 4 focuses on
extending $S$ to the whole of $L^{1}\left(  T\right)  .$ This extension
broadens the scope of the equality of time and space means for $S$-invariant
elements, a result initially established in \cite{L-724}, now extended to the
domain of $T$ denoted by $L^{1}(T)$. In the last section we offer some
applications. In particular we prove that $S$ acts as an isometry from
$L^{p}\left(  T\right)  $ to itself via the $R\left(  T\right)  $-valued norm
$\left\Vert .\right\Vert _{T;p}.$ Then we get several characterizations of
ergodicity that strenghten a previous result obtained in \cite{L-724}. For
more knowledge about ergodicity in the classical case we refer the reader to
\cite{b-3102,b-157,b-1662}. An operator theory approach that is more suitable
for our study is presented in \cite{b-2982}. For Riesz spaces theory that is
not recalled here the reader can consult \cite{b-1089}.

\section{Preliminaries}

A triple $(E,e,T)$ is called a conditional Riesz triple if $E$ is a Dedekind
complete Riesz space, $e$ is an order weak unit of $E$ and $T$ is a
conditional expectation operator on $E$, that is, a strictly positive order
continuous linear projection with $Te=e$ and having a Dedekind complete range.

In \cite{L-724}, the authors introduced the concept of a conditional
expectation preserving system, which is defined as a $4$-tuple $(E,T,S,e)$,
where $(E,e,T)$ is a Riesz triple and $S$ is an order continuous Riesz
homomorphism on $E$ satisfying $Se=e$ and $TSf=Tf$ for all $f\in E$. For $f\in
E$ and $n\in\mathbb{N}$ we set%
\[
S_{n}f:=\frac{1}{n}\sum_{k=0}^{n-1}S^{k}f,
\]
and $L_{S}$ the order limit of this sequence when this limit exists.

The set of $S$-invariant elements of $E$ will be denoted by $\mathcal{I}_{S}$,
that is,%
\[
\mathcal{I}_{S}:=\{f\in E:Sf\ =f\}.
\]

In the classical case of a measure preserving system

$(\Omega,\Sigma,\mu,\tau)$, a set $B\in\Sigma$ is called $\tau$-invariant if
$\tau^{-1}B=B$ and the system $(\Omega,\Sigma,\mu,\tau)$ is said to be ergodic
if every $\tau$-invariant set has measure $0$ or $1$, which is equivalent to
$\chi_{B}$ being the zero or one constant function in $L^{1}(\Omega,\Sigma
,\mu)$ sense. If we define $Sf:=f\circ\tau$, then we can not adopt this
classical definition in the setting of Riesz spaces by assuming that the only
$S$-invariant components are zero and the weak order unit $e$. In fact, by the
Freudenthal theorem, this assumption leads to a trivial result where the
dimension of the range of $T$ is equal to $1$. In other words, we end up with
a situation where $T$ is a functional, which is equivalent to the classical
case. This approach is not very interesting as it does not provide any new
insights. This idea is similar to the problem of extending $L^{\infty}$ spaces
to the setting of Riesz spaces. In the classical theory, for a probability
space $(\Omega,\Sigma,\mu)$,

\begin{center}
$L^{\infty}(\mu)=\{f\in L^{1}(\mu):|f|\leq\lambda$ for some $\lambda
\in\mathbb{R}\}$.
\end{center}

In the setting of Riesz spaces, Trabelsi and Azouzi in \cite{L-180} defined,
for a conditional expectation $T$, the space $L^{\infty}(T)$ as follows:

\begin{center}%
\[
L^{\infty}(T)=\{f\in L^{1}(T):f\leq u\text{ for some }u\in R(T)\}.
\]

\end{center}

A measure preserving transformation $\tau$ induces an operator

$S:L^{1}\longrightarrow L^{1}$ which maps $f$ to $f\circ\tau.$ Ergodicity can
be expressed as follows: The system $L^{1}\left(  \Omega,\Sigma,\mu
,\tau\right)  $ is ergodic if $Sf=f$ implies $f$ is constant almost
everywhere. Or, equivalently, $Sf=f$ implies $f\in R\left(  \mathbb{E}\right)
,$ where $\mathbb{E}$ is the expectation operator. The definition below taken
from \cite{L-950} provides a natural way of extending the concept of
ergodicity to the context of Riesz spaces.

\begin{definition}
\textit{The conditional expectation preserving system} $(E,T,S,e)$ \textit{is
said to be ergodic if}%
\[
Sf=f\ \text{\textit{imples}}\ Tf=f.
\]

\end{definition}

An alternative definition of ergodicity has been proposed earlier by Homann,
Kuo, and Watson in \cite{L-724} stating that a conditional expectation
preserving system $(E,T,S,e)$ is ergodic if it satisfies the condition

\begin{center}
$L_{S}f\in R(T)$ for all $f\in\mathcal{I}_{S}$.
\end{center}

However, the condition $f\in\mathcal{I}_{S}$ implies immediately that
$L_{S}f=f$ and so $f=L_{S}f\in R\left(  T\right)  .$ Noting this, Theorem 3.2
in \cite{L-724} becomes trivial and it is just a rephrasing of the definition
provided just before \cite[Definition 3.1]{L-724}.

Events in probability theory correspond to components in Riesz space theory.
The following lemma generalizes a basic fact about ergodicity by expressing it
in terms of components. It will be used several times in our work.

\begin{lemma}
\label{LC}\textit{The conditional expectation preserving system} $(E,T,S,e)$
\textit{is ergodic if, and only if}%
\[
Sp=p\ \text{implies}\ Tp=p\ \text{for\ all component }p\text{ of }e.
\]

\end{lemma}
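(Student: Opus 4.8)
The plan is to prove the two implications separately, the forward one being immediate and the converse carrying all the content. If the system is ergodic then $Sf=f$ implies $Tf=f$ for \emph{every} $f\in E$, and in particular for every component $p$ of $e$; so only the converse needs work. Assume then that $Sp=p$ implies $Tp=p$ for all components $p$ of $e$, and let $f\in E$ with $Sf=f$. Since $S$ is a Riesz homomorphism with $Se=e$, it commutes with the lattice operations, so $Sf^{+}=f^{+}$, $Sf^{-}=f^{-}$, and $S(f^{\pm}\wedge ne)=f^{\pm}\wedge ne$ for every $n$. Because $e$ is a weak order unit of the Dedekind complete space $E$, the band it generates is all of $E$, hence $f^{\pm}\wedge ne\uparrow f^{\pm}$; by order continuity of $T$ it thus suffices to establish $Tg=g$ for an arbitrary $e$-bounded element $g\ge 0$ with $Sg=g$, say $0\le g\le ce$.

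For such a $g$, I would introduce the spectral components $p_{\lambda}:=\sup_{n}\bigl(e\wedge n(g-\lambda e)^{+}\bigr)$, $\lambda\ge 0$, i.e.\ the component of $e$ in the band generated by $(g-\lambda e)^{+}$. First one checks these are genuine components of $e$ (this uses that $(g-\lambda e)^{+}\wedge ke$ generates the same band as $(g-\lambda e)^{+}$ since $e$ is a weak order unit). Next, using that $S$ is an order continuous Riesz homomorphism fixing $e$ and $g$, one gets $S(g-\lambda e)^{+}=(g-\lambda e)^{+}$ and, passing the homomorphism through the countable supremum by order continuity, $Sp_{\lambda}=p_{\lambda}$. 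So each $p_{\lambda}$ is an $S$-invariant component of $e$, whence $Tp_{\lambda}=p_{\lambda}$ by hypothesis. Finally, Freudenthal's theorem writes $g$ as the $e$-uniform limit of $e$-step functions $g_{m}=\sum_{i}\lambda_{i}^{(m)}\bigl(p_{\lambda_{i}^{(m)}}-p_{\lambda_{i+1}^{(m)}}\bigr)$; linearity of $T$ gives $Tg_{m}=g_{m}$, and since $e$-uniform convergence implies order convergence while $T$ is order continuous, $Tg=g$. Unwinding the truncation and the decomposition $f=f^{+}-f^{-}$ then yields $Tf=f$.

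I expect the only real obstacle to be the two structural verifications in the second paragraph: that the spectral projections $p_{\lambda}$ genuinely lie in the Boolean algebra of components of $e$, and that $S$ fixes them. Both come down to the band identity for a weak order unit together with the order continuity of $S$, so they are more bookkeeping than difficulty. As an alternative packaging that may streamline the exposition, one can instead observe that $\mathcal{I}_{S}$ is an order-closed Riesz subspace of $E$ (order closure being inherited from the order continuity of $S$) with weak order unit $e$, note that its components of $e$ are exactly the $S$-invariant components of $e$ and hence lie in $R(T)$ by hypothesis, and then invoke Freudenthal inside $\mathcal{I}_{S}$ together with the order-closedness of the Dedekind complete space $R(T)$ to conclude $\mathcal{I}_{S}\subseteq R(T)$, which is precisely ergodicity.
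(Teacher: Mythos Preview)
Your proposal is correct and follows essentially the same route as the paper: reduce to positive (and $e$-bounded) $S$-invariant elements, introduce the Freudenthal spectral components, verify via the order continuity and lattice homomorphism properties of $S$ that these components are $S$-invariant, apply the hypothesis to put them in $R(T)$, and pass to the limit using order continuity of $T$. The paper's argument is the same in spirit, only more compressed (it works directly with the components $p_{\alpha}=P_{(\alpha e-f)^{+}}e$ without the preliminary truncation step you spell out), and your alternative packaging through the order-closed subspace $\mathcal{I}_{S}$ is a clean variant the paper does not mention.
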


\begin{proof}
We need only to prove that the condition is sufficient. Consider $f\in E^{+}$
such that $Sf=f.$ Then $f$ is an order limit of a sequence $\left(
f_{n}\right)  $ where $f_{n}$ is a linear combination of components of the
form$\ p_{\alpha}=P_{\left(  \alpha e-f\right)  ^{+}}e,$ $\alpha\in
\mathbb{R}.$ We claim that $Sp_{\alpha}=p_{\alpha}$. Indeed $p_{\alpha}%
=\sup\limits_{n}\left(  e\wedge n\left(  \alpha e-f\right)  ^{+}\right)  $ and
as $S$ is order continuous and lattice homomorphism we get%
\[
Sp_{\alpha}=\sup\limits_{n}\left(  Se\wedge n\left(  \alpha Se-Sf\right)
^{+}\right)  =\sup\limits_{n}\left(  e\wedge n\left(  \alpha e-f\right)
^{+}\right)  =p_{\alpha},
\]
which proves the claim. It follows from our assumption that $Tp_{\alpha
}=p_{\alpha}$ for all real $\alpha$ and then $Tf=f$ by linearity and order
continuity of $T.$
\end{proof}

The main idea behind extending ergodic theory to the setting of Riesz spaces
is to formulate everything in terms of operators. Two pivotal operators come
into play: the condition expectation operator $T$ and the Riesz homomorphism
$S.$ These operators exhibit commutativity and adhere to the relationship
$ST=TS=T.$ Only condition $TS=T,$ which is imposed by definition, was explored
in the previous papers dealing with the subject. Condition $ST=T,$ however,
has not been remarked before and will be proved in our forthcoming lemma.

\begin{lemma}
\label{ST}\textit{Let} $(E,T,S,e)$ \textit{be a conditional expectation
preserving system. Then }$Sf=f\ $for\ all$\ f\in R(T).$
\end{lemma}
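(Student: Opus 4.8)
The plan is to reduce to the case of components of $e$ lying in $R(T)$, and then to bootstrap from components to all of $R(T)$ via the Freudenthal spectral theorem, exactly along the lines of the proof of Lemma~\ref{LC}. The central algebraic observation is that an element $f\in R(T)$ is fixed by $T$, and that the extra ceiling $0\le p\le e$ available for components lets one turn the identity $TSp=Tp$ into $Sp=p$ by a lattice estimate combined with strict positivity of $T$.

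Step~1 (components). Let $p$ be a component of $e$ with $p\in R(T)$, and set $q:=Sp$. Since $S$ is a positive operator with $Se=e$ we have $0\le q\le e$; since $TS=T$ and $Tp=p$ we get $Tq=TSp=Tp=p$. Using the Riesz space identity $a-a\wedge b=(a-b)^{+}$, write $(p-q)^{+}=p-p\wedge q\le p$, and note that $p\le e$, $q\le e$ give $(p-q)^{+}=(p-q)\vee 0\le e-q$. Applying the positive operator $T$ yields $T\big((p-q)^{+}\big)\le Tp\wedge T(e-q)=p\wedge(e-p)=0$, while positivity of $(p-q)^{+}$ forces $T\big((p-q)^{+}\big)\ge 0$; hence $T\big((p-q)^{+}\big)=0$ and, by strict positivity of $T$, $(p-q)^{+}=0$. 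The symmetric computation applied to $(p-q)^{-}=(q-p)^{+}=q-q\wedge p$, which is $\le q$ and $\le e-p$, gives $T\big((p-q)^{-}\big)\le Tq\wedge T(e-p)=0$ and therefore $(p-q)^{-}=0$. Thus $Sp=q=p$.

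Step~2 (general $f\in R(T)$). First recall that $R(T)$ is a Dedekind complete Riesz subspace of $E$ containing $e$, and that it is order closed because $T$ is order continuous: if $g_{\alpha}\uparrow g$ with $g_{\alpha}\in R(T)$, then $g_{\alpha}=Tg_{\alpha}\uparrow Tg$, so $g=Tg\in R(T)$. It therefore suffices to handle $f\in R(T)^{+}$, the general case following from $f=f^{+}-f^{-}$ with $f^{\pm}\in R(T)$. For $f\in R(T)^{+}$, the Freudenthal theorem presents $f$ as an order limit of a sequence of linear combinations of the components $p_{\alpha}=P_{(\alpha e-f)^{+}}e$; since $e,f\in R(T)$ and $R(T)$ is an order closed Riesz subspace, each $p_{\alpha}$ belongs to $R(T)$ and is a component of $e$. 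By Step~1, $Sp_{\alpha}=p_{\alpha}$ for every real $\alpha$, so $S$ fixes each approximating linear combination, and the order continuity of $S$ gives $Sf=f$.

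The only delicate point is ensuring that the spectral building blocks $p_{\alpha}$ really lie in $R(T)$; this is where the standard structural facts — that the range of a conditional expectation operator is a Riesz subspace, and that it is order closed by order continuity of $T$ — are used. Once these are in place, the lattice estimate of Step~1 together with strict positivity of $T$ carries the whole argument, and the transition from components to arbitrary elements of $R(T)$ is the same routine Freudenthal reduction already employed for Lemma~\ref{LC}.
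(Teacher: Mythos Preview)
Your proof is correct, and the reduction to components via Freudenthal in Step~2 matches the paper exactly. The component case, however, is handled differently. The paper first shows that $Sp$ is again a component of $e$ (using that $S$ is a Riesz homomorphism), then invokes the averaging property to compute $T(p\cdot Sp)=p\,TSp=p\,Tp=p^{2}=p=Tp$, deduces $p=p\cdot Sp\le Sp$ from strict positivity, and finishes with $T(Sp-p)=0$. Your argument instead stays purely at the lattice level: from $0\le Sp\le e$ and $TSp=Tp=p$ you squeeze $T((p-Sp)^{\pm})$ between $0$ and $p\wedge(e-p)=0$ and conclude by strict positivity. Your route avoids both the $f$-algebra multiplication (the averaging identity) and the Riesz homomorphism property of $S$ in the key step --- only positivity of $S$, $Se=e$, and $TS=T$ are used there --- so the component argument actually goes through for any positive operator with these properties; the paper's version is shorter once the multiplicative structure is in hand, but yours is more elementary and slightly more general.
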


\begin{proof}
By Freudenthal Theorem \cite[Theorem 33.3]{b-1089}, every vector in $R\left(
T\right)  $ is an order limit of a sequence of linear combination of
components. As $S$ is linear and order continuous it is sufficient to prove
that $S$ and $\operatorname*{Id}\nolimits_{R\left(  T\right)  }$ coincide on
components of $R\left(  T\right)  .$ Pick then a component $f$ in the range of
$T$. Then%
\[
Sf\wedge\left(  e-Sf\right)  =Sf\wedge S\left(  e-f\right)  =S\left(
f\wedge\left(  e-f\right)  \right)  =0.
\]
Hence $Sf$ is a component of $e.$

On the other hand, the averaging property of $T$ yields%
\[
T\left(  fSf\right)  =fTSf=fTf=f^{2}=f=Tf.
\]
Now since $T$ is strictly positive and $f\geq fSF$ we get $f=fSf.$. This
implies that $f\leq Sf$ and using again the strict positivity of $T$ gives
$Sf=f.$ This completes the proof.
\end{proof}

We record here a simple but useful fact.

\begin{lemma}
\label{TpC}Let $p$ be a component of $e$ such that $Tp$ is a component of $e$.
Then $Tp=p.$
\end{lemma}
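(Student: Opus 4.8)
The plan is to exploit the $f$-algebra structure carried by (the universal completion of) $E$, in which $e$ is the multiplicative identity and $T$ satisfies the averaging property $T(f\cdot Tg)=Tf\cdot Tg$ for order bounded $f,g$ — exactly the ingredients already used in the proof of Lemma \ref{ST}. First I would record the two elementary facts I need: since $T$ is positive with $Te=e$ and $0\le p\le e$ (because $p$ is a component), we have $0\le Tp\le e$; and since $Tp$ is assumed to be a component of $e$, it is idempotent in the $f$-algebra, i.e.\ $(Tp)^{2}=Tp$ (equivalently $Tp\cdot(e-Tp)=0$).

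Next I would apply the averaging property to $f=e-p$ (which satisfies $0\le e-p\le e$) and $g=p$, obtaining
\[
T\bigl((e-p)\cdot Tp\bigr)=T(e-p)\cdot Tp=(e-Tp)\cdot Tp=Tp-(Tp)^{2}=0 .
\]
Now $(e-p)\cdot Tp$ is a product of two positive elements of the $f$-algebra, hence $(e-p)\cdot Tp\ge 0$; since $T$ is strictly positive and its image vanishes, I conclude $(e-p)\cdot Tp=0$, that is, $Tp=p\cdot Tp$. Because multiplication by the positive element $Tp\le e$ is monotone, this yields $Tp=p\cdot Tp\le p\cdot e=p$.

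It remains to upgrade $Tp\le p$ to equality. Since $Tp\in R(T)$ and $T$ is a projection onto its range, $T(p-Tp)=Tp-Tp=0$, while $p-Tp\ge 0$; strict positivity of $T$ then forces $p-Tp=0$, i.e.\ $Tp=p$, which is the claim. The only point I expect to require a word of care is the legitimacy of these $f$-algebra manipulations and of the averaging identity in the abstract Riesz space setting; this is handled exactly as in Lemma \ref{ST}, all elements involved being order bounded by $e$. (Incidentally, the hypothesis on $p$ enters only through the inequalities $0\le p\le e$, so the same conclusion holds for any $p$ with $0\le p\le e$ whose image $Tp$ happens to be a component of $e$.)
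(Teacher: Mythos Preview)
Your argument is correct and essentially the same as the paper's: both use the averaging identity together with the idempotency $(Tp)^{2}=Tp$ and the strict positivity of $T$, the only cosmetic difference being that the paper applies averaging to $p\cdot Tp$ to obtain $p=p\,Tp\le Tp$ first, whereas you apply it to $(e-p)\cdot Tp$ to obtain $Tp=p\,Tp\le p$ first, after which both finish identically via $T(p-Tp)=0$. Your closing remark that only $0\le p\le e$ is actually used for $p$ is a valid sharpening.
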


\begin{proof}
By the averaging property of $T$ we have%
\[
Tp=Tp.Tp\ =T\ (p.Tp),
\]
which yields $T(p-pTp)=0$. But since $p-pTp\geq0$ and $T$ is strictly positive
we deduce that $p=pTp=p\wedge Tp$. Hence $p\leq Tp,$ and again the strict
positivity of $T$ implies $p=Tp$.
\end{proof}

\section{Ergodicity via conditional expectation}

Recall that a dynamical system is said to be ergodic if it has no nontrivial
invariant subsets. Numerous alternative characterizations of ergodicity can be
found in the literature (see \cite{b-157,b-938,b-3102}.

\begin{theorem}
\label{T3}\textit{Let} $(\Omega,\Sigma,\mu,\tau)$ \textit{be a measure
preserving system. The following statements are equivalent}.

\begin{enumerate}
\item[(i)] $(\Omega,\Sigma,\mu,\tau)$ \textit{is ergodic};

\item[(ii)] \textit{For any} $B\in\Sigma\ $with $\mu(\tau^{-1}(B)\triangle
B)=0$ we have $\mu(B)\in\{0,1\}$;

\item[(iii)] \textit{For any} $A\in\Sigma\ $with $\mu(A)>0$ we have
$\mu(\bigcup_{n=1}^{\infty}\tau^{-n}A)=1$.
\end{enumerate}
\end{theorem}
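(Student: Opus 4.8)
The plan is to establish the chain of implications $(i)\Rightarrow(ii)\Rightarrow(iii)\Rightarrow(i)$, working with the classical measure-theoretic definitions and relying on the standard fact that ergodicity of $(\Omega,\Sigma,\mu,\tau)$ is equivalent to the statement that any $\tau$-invariant set (with $\tau^{-1}B=B$ exactly) has measure $0$ or $1$. Note that we are in a probability space, so $\mu(\Omega)=1$ and the total measure normalization is what makes the dichotomy $\{0,1\}$ meaningful.

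For $(i)\Rightarrow(ii)$, the key observation is that a set $B$ that is invariant only up to a null set, i.e. $\mu(\tau^{-1}B\triangle B)=0$, can be replaced by a genuinely invariant set without changing its measure. The standard device is to put $B_\infty=\bigcap_{N\geq 0}\bigcup_{n\geq N}\tau^{-n}B$ (the $\limsup$ of the preimages); one checks that $\tau^{-1}B_\infty=B_\infty$ exactly and that $\mu(B_\infty\triangle B)=0$, using the fact that $\mu(\tau^{-n}B\triangle B)=0$ for every $n$ (which follows from $\mu(\tau^{-1}B\triangle B)=0$ by induction, since $\tau$ preserves $\mu$ and $\tau^{-1}$ commutes with $\triangle$). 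Then ergodicity applied to $B_\infty$ gives $\mu(B_\infty)\in\{0,1\}$, hence $\mu(B)\in\{0,1\}$.

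For $(ii)\Rightarrow(iii)$, suppose $\mu(A)>0$ and set $C=\bigcup_{n=1}^{\infty}\tau^{-n}A$. Then $\tau^{-1}C=\bigcup_{n=1}^{\infty}\tau^{-(n+1)}A=\bigcup_{n=2}^{\infty}\tau^{-n}A\subseteq C$, so $C$ is ``almost invariant'' in the sense that $\tau^{-1}C\subseteq C$; since $\tau$ is measure preserving, $\mu(C\setminus\tau^{-1}C)=\mu(C)-\mu(\tau^{-1}C)=0$, whence $\mu(\tau^{-1}C\triangle C)=0$. By $(ii)$, $\mu(C)\in\{0,1\}$. But $\tau^{-1}A\subseteq C$ and $\mu(\tau^{-1}A)=\mu(A)>0$, so $\mu(C)>0$; therefore $\mu(C)=1$.

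For $(iii)\Rightarrow(i)$, argue by contraposition: if the system is not ergodic there is an invariant set $B$ with $\tau^{-1}B=B$ and $0<\mu(B)<1$. Apply $(iii)$ to $A=\Omega\setminus B$, which has positive measure; invariance of $B$ gives $\tau^{-n}A=\Omega\setminus B$ for every $n$, so $\bigcup_{n=1}^\infty\tau^{-n}A=\Omega\setminus B$ has measure $1-\mu(B)<1$, contradicting $(iii)$. I expect the only delicate point to be the verification in $(i)\Rightarrow(ii)$ that $B_\infty$ is exactly invariant and null-equivalent to $B$; the other two implications are essentially formal manipulations of preimages together with the measure-preserving property. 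Everything here is classical, so the ``obstacle'' is merely bookkeeping rather than any genuine difficulty — the real content of the paper will be in transporting these equivalences to the Riesz space setting in the sections that follow.
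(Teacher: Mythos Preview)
Your proof is correct and follows the standard classical argument. However, the paper does not actually prove Theorem~\ref{T3}: it is stated there as a well-known result, with references to the ergodic theory textbooks \cite{b-157,b-938,b-3102}, and the paper's own contribution is the Riesz-space analogue, Theorem~\ref{T4}. So there is nothing to compare against --- your write-up simply supplies a proof that the paper deliberately omits.
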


Before presenting and proving a Riesz space version of Theorem \ref{T3} let us
give some commentary on its statement.

\begin{remark}
\textit{Poincar\'{e} Recurrence Theorem states that for any measure preserving
system }$(\Omega,\Sigma,\mu,\tau)$ \textit{and any set} $A$ \textit{of
positive measure, almost every point of} $A$ \textit{is recurrent with respect
to }$A$\textit{. Ergodicity strengthens this statement by asserting that every
point in the entire space} $\Omega$ \textit{is recurrent with respect to} $A$,
\textit{as described in} (iii).
\end{remark}

We state and prove now an analogous of Theorem \ref{T3}.

\begin{theorem}
\label{T4}\textit{Let} $(E,T,S,e)$ \textit{be a conditional expectation
preserving system. The following statements are equivalent}.

\begin{enumerate}
\item[(i)] The system $\left(  E,T,S,e\right)  $ \textit{is ergodic};

\item[(ii)] \textit{For any component} $p$ \textit{of} $e,\ T((e-p)Sp)=0$
\textit{implies that} $p\in R(T);$

\item[(iii)] \textit{For any component} $p$ \textit{of} $e$ we have $%
{\textstyle\bigvee\limits_{n=1}^{\infty}}
S^{n}p\in R(T).$
\end{enumerate}
\end{theorem}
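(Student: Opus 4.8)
The plan is to prove the cycle of implications (i)$\Rightarrow$(iii)$\Rightarrow$(ii)$\Rightarrow$(i), using Lemma~\ref{LC} to reduce everything to components, Lemma~\ref{TpC} to convert ``$Tp$ is a component'' into ``$Tp=p$'', and Lemma~\ref{ST} to handle elements already in $R(T)$. The key object is $q:=\bigvee_{n=1}^{\infty}S^{n}p$, which exists in $E$ since $E$ is Dedekind complete and the partial suprema $\bigvee_{n=1}^{N}S^{n}p$ are bounded above by $e$ (each $S^{n}p$ is a component of $e$, because $S$ is a lattice homomorphism with $Se=e$, so $S^{n}p\wedge S^{n}(e-p)=0$). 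Two structural facts about $q$ drive the argument: first, $Tq\geq Tp=T(S p)$ (since $q\geq Sp$ and $TS=T$), so $Tq\geq Tp$; second, and more importantly, $q$ is ``almost'' $S$-invariant in the sense that $Sq=\bigvee_{n=1}^{\infty}S^{n+1}p=\bigvee_{n=2}^{\infty}S^{n}p\leq q$, so $Sq\leq q$. The natural way to upgrade $Sq\le q$ to $Sq=q$ is to apply $T$: from $Sq\le q$ and $TS=T$ we get $Tq=T(Sq)\le Tq$, which is vacuous, so instead I will use strict positivity of $T$ together with the averaging identity on the component structure — concretely, $T(q-Sq)=Tq-T(Sq)=Tq-Tq=0$ and $q-Sq\geq 0$, hence $q=Sq$ by strict positivity of $T$. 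Thus $q\in\mathcal{I}_{S}$.

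For (i)$\Rightarrow$(iii): once $Sq=q$, ergodicity (Definition) gives $Tq=q$, so $q\in R(T)$, which is exactly (iii).

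For (iii)$\Rightarrow$(ii): assume (iii) and let $p$ be a component of $e$ with $T((e-p)Sp)=0$. Since $(e-p)Sp\geq 0$ and $T$ is strictly positive, this forces $(e-p)Sp=0$, i.e. $Sp\leq p$ (as $Sp$ and $p$ are components, $(e-p)\wedge Sp = (e-p)Sp$, so $Sp\wedge(e-p)=0$ means $Sp\le p$). Then $S^{n}p\leq p$ for all $n\geq 1$ by monotonicity and iteration, so $q=\bigvee_{n\ge1}S^{n}p\leq p$. On the other hand (iii) gives $q\in R(T)$, and Lemma~\ref{ST} then yields $Sq=q$; but also $q\ge Sp\ge S^2 p\ge\cdots$, and I claim $q\le Sp$ is forced: indeed $Sq=q$ combined with $q=\bigvee_{n\ge 1}S^n p$ gives $q=Sq=\bigvee_{n\ge 2}S^n p$, and inductively $q=\bigvee_{n\ge k}S^n p$ for every $k$; since $S^n p\le p$, passing these through $S^{-1}$-monotonicity is not available, so instead I note $Sp\le p$ directly gives the sequence $S^np$ is \emph{decreasing}, whence $q=\bigvee_{n\ge1}S^np=Sp$. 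Now $Sp=q\in R(T)$, so by Lemma~\ref{ST} again $S(Sp)=Sp$, i.e. $S^2p=Sp$; combined with $Sp\le p$ we get, applying $T$ and using $TS p = Tp$: $T(p-Sp)=Tp-TSp=0$ with $p-Sp\ge0$, so $p=Sp$. Hence $Sp=p$, and then $Sp=q\in R(T)$ with $Tp = TSp=Tp$ — more directly, $p=Sp=q\in R(T)$ gives $p\in R(T)$, which is (ii).

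For (ii)$\Rightarrow$(i): by Lemma~\ref{LC} it suffices to show that for every component $p$ of $e$, $Sp=p$ implies $Tp=p$. So assume $Sp=p$. Then $(e-p)Sp=(e-p)p=p-p^2=p-p=0$ (components satisfy $p^2=p$), hence trivially $T((e-p)Sp)=0$, so (ii) applies and gives $p\in R(T)$; then $Tp=p$ since $T$ is a projection onto its range. This closes the cycle. The main obstacle I anticipate is the bookkeeping in (iii)$\Rightarrow$(ii): one must be careful that $Sp\le p$ really does make $(S^np)_n$ a decreasing sequence with infimum-free supremum equal to $Sp$, and that the component arithmetic $(e-p)\wedge Sp=(e-p)Sp$ is used correctly — this last identity, that for components $p,q$ of $e$ one has $p\wedge q = pq$, should be recalled or cited from the Riesz space background, and the strict positivity of $T$ is the workhorse throughout for turning ``$T$ of a positive element is $0$'' into ``that element is $0$''.
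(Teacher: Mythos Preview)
Your proof is correct and rests on the same ingredients as the paper's (the $S$-invariance of $q=\bigvee_{n\ge1}S^np$ obtained from $Sq\le q$ together with $T(q-Sq)=0$ and strict positivity of $T$; the reduction to components via Lemma~\ref{LC}), but you traverse the cycle in the opposite direction: the paper proves (i)$\Rightarrow$(ii)$\Rightarrow$(iii)$\Rightarrow$(i), whereas you prove (i)$\Rightarrow$(iii)$\Rightarrow$(ii)$\Rightarrow$(i).

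Your argument for (iii)$\Rightarrow$(ii) is more convoluted than necessary. Once you have $(e-p)Sp=0$ and hence $Sp\le p$, the line you eventually write --- $T(p-Sp)=Tp-TSp=0$ with $p-Sp\ge0$, so $p=Sp$ by strict positivity --- is already available immediately; the intermediate detour through $q=Sp$, Lemma~\ref{ST}, and $S^2p=Sp$ is superfluous. After $p=Sp$ one has $\bigvee_{n\ge1}S^np=p$, and (iii) gives $p\in R(T)$ directly. By contrast, the paper's step (ii)$\Rightarrow$(iii) is slicker: having shown that $c=\bigvee_{n\ge1}S^np$ is an $S$-invariant component, one simply notes $T((e-c)Sc)=T((e-c)c)=0$, so hypothesis (ii) applied to $c$ itself yields $c\in R(T)$.
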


\begin{remark}

\begin{enumerate}
\item \textit{As previously mentioned, the assumptions in the Riesz space
setting differ from those in the classical framework. For instance, the
condition} (\textit{ii}) \textit{in Theorem \ref{T3}}, \textit{can be written
as follows}%
\[
\mathbb{E}\left[  1_{B}1_{(\tau^{-1}B)^{c}}+1_{B^{C}}.1_{\tau^{-1}\left(
B\right)  }\right]  =0.
\]
\textit{Its analogous in our setting should be}%
\[
T(p(e-Sp))+T((e-p)Sp)\ =0.
\]
\textit{We note that our condition suggested in Theorem \ref{T4} is weaker}.

\item \textit{It can be observed that the proofs in the setting of Riesz
spaces are simpler and more straightforward than those in the classical
setting}.
\end{enumerate}
\end{remark}

\begin{proof}
(\textit{i}) $\Rightarrow$ (\textit{ii}). Suppose that the system is ergodic
and let $p$ be a component of $e$ with $T(Sp(e-p))=0$. Since $T$ is strictly
positive and $pSp\leq p$, it follows that $pSp=p,$ which implies that $p\leq
Sp$. But $Tp=TSp$. Again from the fact that $T$ is strictly positive, we get
$p=Sp$. Hence, by ergodicity, $p\in R(T)$.

(ii) $\Rightarrow$ (\textit{iii}). Let $p$ be a component of $e$ and $c=%
{\textstyle\bigvee\limits_{n=1}^{\infty}}
S^{n}p$. Since for all $n\in\mathbb{N},\ S^{n}p$ is a component of $e$ and the
set of components of the weak order unit is a complete Boolean algebra (add
reference), we obtain that $c$ is a component of $e$. As $S$ is a Riesz
homomorphism, we have $Sc\leq c$. Using the fact that $TS=T$ and that $T$ is
strictly positive, we obtain that $c$ is an $S$-invariant component of $e$.
Since%
\[
0=T(c(e-c))=T(Sc(e-c)),
\]
by (\textit{ii}) we obtain $c\in R(T)$.

(\textit{iii}) $\Rightarrow$ (\textit{i}) According to Lemma \ref{LC} it is
sufficient to prove that $Tp=p$ for each component $p$ satisfying $Sp=p$. Let
$p$ be such component, then (\textit{iii}) gives $p=%
{\textstyle\bigvee\limits_{n=1}^{\infty}}
S^{n}p\in R(T).$ Thus the system is ergodic.
\end{proof}

\section{Extension of $S$}

In \cite{b-1662}, it was shown that a dynamical system $(\Omega,\Sigma
,\mu,\tau)$ is ergodic if and only if for each $f\in L^{1}(\Omega,\Sigma,\mu
)$, the time mean of $f$ equals the space mean of $f$ a.e. That is,%
\[
\lim_{n\rightarrow\infty}\frac{1}{n}\sum_{k=0}^{n-1}f(\tau^{k}x)=\int_{\Omega
}fd\mu\ \text{a.e.}%
\]

In \cite{L-724}, the authors extended this result to the setting of Riesz
spaces. They established that a conditional expectation preserving system
$(E,T,S,e)$ is ergodic if and only if

\begin{center}
$\lim\limits_{n\rightarrow\infty}\dfrac{1}{n}\sum\limits_{k=0}^{n}S^{k}f=Tf$
for all $f\in\mathcal{I}_{S}$.
\end{center}

It is worth noting that the characterization provided above does not precisely
align with the classical definition of ergodicity where a function $f$ is
required to be in $L^{1}(\Omega,\Sigma,\mu)$, instead of just being $\tau
$-invariant. We will show below this result by requiring that $f$ belongs to
$L^{1}(T)$ which is the analogous space of $L^{1}(\Omega,\Sigma,\mu)$ in the
setting of Riesz spaces. In addition, the existence of $L_{S}$, as shown in
\cite{L-33}, requires that we work in a $T$-universally complete space. This
is why the space $L^{1}(T)$ is a good choice. It was shown in \cite{L-24} that
the conditional expectation $T$ can be extended to its natural domain
$L^{1}(T)$. Before stating our result about ergodicity we need to prove first
that $S$ also can be extended to $L^{1}(T)$.

Recall from \cite{L-24} that $L^{1}(T)=D-D$ \textit{where}

\begin{center}
$D=$ \{$x\in E_{+}^{u}:\exists(x_{\alpha})\subset E_{+},\ 0\leq x_{\alpha
}\uparrow x,\ (Tx_{\alpha})$ order bounded\}.
\end{center}

An equivalent definition has been suggested by Grobler as follows:

$D=\left\{  x\in E_{+}^{s}:\sup Tx_{\alpha}\in E^{u}\text{ for some net
}\left(  x_{\alpha}\right)  \ \text{ in }E_{+}\text{ with }x_{\alpha}\uparrow
x\right\}  $. The discussion concerning the equivalence of definitions
obtained through both the universal completion and the sup-completion
approaches has been detailed in \cite{L-444}. In the proof of Theorem
\ref{Ext} below, we opt for the sup-completion method as it allows for a more
concise argument.

We know by \cite[Proposition 3]{L-444} that $S$ has a unique extension
$S^{\ast}:E^{s}\longrightarrow E^{s}$ which is increasing, left continuous.
Moreover it is additive and positively homogenous. \textit{For every} $x\in
E^{s},$ $S^{\ast}x=\sup\limits_{\alpha}Sx_{\alpha}.$\textit{where}
$(x_{\alpha})$ is any net in $E_{+}$ satisfying $x_{\alpha}\uparrow x$.

\begin{theorem}
\label{Ext}\textit{Let} $(E,T,S,e)$ \textit{be a conditional expectation
preserving system. Then }$S$ has \textit{a unique order continuous Riesz
homomorphism extension to} $L^{1}(T),$ denoted again by $S,$ which fulfills
$TS=T.$
\end{theorem}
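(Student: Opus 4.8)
The plan is to obtain the extension from the sup-completion extension $S^{\ast}:E^{s}\to E^{s}$ recalled just above the theorem, and to invoke the identity $TS=T$ (valid on $E$) exactly at the point where the domain must be controlled. First I would show that $S^{\ast}$ maps $D$ into $D$: given $x\in D$, pick a net $(x_{\alpha})\subset E_{+}$ with $x_{\alpha}\uparrow x$ and $(Tx_{\alpha})$ order bounded in $E^{u}$; then $(Sx_{\alpha})$ is an increasing net in $E_{+}$ with $Sx_{\alpha}\uparrow S^{\ast}x$ by left continuity of $S^{\ast}$, and since $T(Sx_{\alpha})=Tx_{\alpha}$ the net $(T(Sx_{\alpha}))$ is order bounded in $E^{u}$, so $S^{\ast}x\in D$. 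Using that $D$ is closed under addition and that $S^{\ast}$ is additive and positively homogeneous, one then defines $Sf:=S^{\ast}x-S^{\ast}y$ for $f=x-y\in D-D=L^{1}(T)$; well-definedness and linearity over $\mathbb{R}$ follow formally from the corresponding properties of $S^{\ast}$ together with $D+D\subseteq D$.

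It then remains to see that this extension is an order continuous Riesz homomorphism and satisfies $TS=T$. For order continuity it suffices to show $Sf_{\alpha}\uparrow Sf$ whenever $f_{\alpha}\uparrow f$ in $L^{1}(T)$; fixing an index $\alpha_{0}$, the cofinal net $f_{\alpha}-f_{\alpha_{0}}\uparrow f-f_{\alpha_{0}}$ lies in $D$, left continuity of $S^{\ast}$ gives $S^{\ast}(f_{\alpha}-f_{\alpha_{0}})\uparrow S^{\ast}(f-f_{\alpha_{0}})$, and this is exactly $Sf_{\alpha}-Sf_{\alpha_{0}}\uparrow Sf-Sf_{\alpha_{0}}$. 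For the lattice property, since a positive operator is a Riesz homomorphism as soon as it preserves disjointness, I would take $f,g\in D$ with $f\wedge g=0$, approximate by nets $(f_{\alpha}),(g_{\beta})\subset E_{+}$ increasing to $f,g$, observe that $f_{\alpha}\wedge g_{\beta}\leq f\wedge g=0$, hence $Sf_{\alpha}\wedge Sg_{\beta}=S(f_{\alpha}\wedge g_{\beta})=0$ because $S$ is a Riesz homomorphism on $E$, and then pass to the supremum via the infinite distributive law to conclude $S^{\ast}f\wedge S^{\ast}g=0$; applied to $f^{+},f^{-}\in D$ this yields $|Sf|=S^{\ast}f^{+}+S^{\ast}f^{-}=S^{\ast}|f|=S|f|$. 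Finally, for $x\in D$ the net $(Sx_{\alpha})$ constructed above both witnesses $Sx\in D$ and, by the description of the extended $T$ on $L^{1}(T)$, gives $T(Sx)=\sup_{\alpha}T(Sx_{\alpha})=\sup_{\alpha}Tx_{\alpha}=Tx$, so $TS=T$ on $L^{1}(T)$ by linearity. Uniqueness is immediate: any order continuous Riesz homomorphism extension agrees with $S$ on $E_{+}$, hence on each $x\in D$ as an increasing supremum of elements of $E_{+}$, hence on $D-D=L^{1}(T)$.

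The step I expect to be the main obstacle is the verification that $S^{\ast}(D)\subseteq D$ together with the accompanying bookkeeping about suprema: one must check that the defining net for $S^{\ast}x$, the distributive law used in the disjointness argument, and the net formula for the extended $T$ may all be read consistently in the sup-completion $E^{s}$ and in its subspace $L^{1}(T)$. Once that passage between $E^{s}$ and $L^{1}(T)$ is pinned down, every remaining step uses only $TS=T$ on $E$ and the properties of $S^{\ast}$ recalled before the statement, and is routine.
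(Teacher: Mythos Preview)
Your proposal is correct and follows essentially the same route as the paper: both use the sup-completion extension $S^{\ast}:E^{s}\to E^{s}$, show via $TSx_{\alpha}=Tx_{\alpha}$ that $S^{\ast}$ maps $D$ (equivalently $L^{1}(T)_{+}$) into itself, and extend by linearity. The only cosmetic differences are that the paper verifies the lattice property by computing $S^{\ast}(x\vee y)=S^{\ast}x\vee S^{\ast}y$ directly on positive elements rather than through disjointness preservation, and that the paper outsources order continuity to \cite[Proposition~3]{L-444} whereas you spell it out; you also supply the uniqueness argument, which the paper omits.
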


\begin{proof}
Let us consider the extensions $S^{\ast}$ defined above. A natural way to
extend $S^{\ast}$ to all of $L^{1}\left(  T\right)  $ involves defining%
\[
S^{\ast}\left(  f\right)  :=S^{\ast}(f^{+})-S^{\ast}(f^{-}),\qquad\text{for
}f\in L^{1}(T).
\]
It follows from \cite[Proposition 3]{L-444} that $S^{\ast}$ is order
continuous on $L^{1}\left(  T\right)  $ once we prove that $S^{\ast}$ maps
$L^{1}\left(  T\right)  $ into itself. To this end let $x\in L^{1}\left(
T\right)  _{+}$ and $(x_{\alpha})$ be a net in $E_{+}\ $with $x_{\alpha
}\uparrow x.$ Then $Sx_{\alpha}\uparrow S^{\ast}x$ and then $TSx_{\alpha
}\uparrow TS^{\ast}x.$ But $TSx_{\alpha}=Tx_{\alpha}\uparrow Tx.$ So
$Tx=TS^{\ast}x\in E^{u}$ and in particular $S^{\ast}x\in L^{1}\left(
T\right)  .$ We deduce that $S^{\ast}$ maps $L^{1}\left(  T\right)  $ into
itself and satisfies $\mathcal{T}S^{\ast}=S^{\ast}.$

Obviously $S^{\ast}e=e$ and it remains only to show that $S^{\ast}$ is a Riesz
homomorphism. Let $x,y\in L^{1}(T)_{+}$. Then there exist two nets
$(x_{\alpha})$ and $(y_{\alpha})$ in $\subset E_{+}$ with $x_{\alpha}\uparrow
x$ and $y_{\alpha}\uparrow y$. Then $x_{\alpha}\vee y_{\alpha}\uparrow x\vee
y$ and so%
\begin{align*}
S^{\ast}(x\vee y)  &  =\sup_{\alpha}S(x_{\alpha}\vee y_{\alpha})=\sup_{\alpha
}(Sx_{\alpha}\vee Sy_{\alpha})\\
&  =\sup_{\alpha}Sx_{\alpha}\vee\sup_{\alpha}Sy_{\alpha}=Sx\vee Sy.
\end{align*}
This completes the proof.
\end{proof}

As is customary, the unique extension of $S$ to $L^{1}\left(  T\right)  $ will
be denoted as $S.$ We can present the analogue of \cite[Theorem 4.4]{b-1662}
within the context of Riesz spaces.

\begin{theorem}
\label{L1}\textit{The conditional expectation preserving system} $(E,T,S,e)$
\textit{is ergodic if and only if} the following condition holds:%
\begin{equation}
L_{S}f=Tf\ \text{for all }f\in L^{1}(T). \label{L1-C}%
\end{equation}

\end{theorem}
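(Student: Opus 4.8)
The plan is to prove both implications directly, using the results already established. For the ``if'' direction, suppose condition \eqref{L1-C} holds. To show the system is ergodic, take any $f \in E$ with $Sf = f$. Since $E \subseteq L^1(T)$, we may apply \eqref{L1-C}; but $Sf = f$ forces $S_n f = f$ for every $n$, hence $L_S f = f$. Thus $f = L_S f = Tf$, which is exactly the definition of ergodicity (Definition following the statement about $L^\infty(T)$). This direction is essentially immediate once we notice $S$-invariance trivializes the Ces\`aro average.

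For the ``only if'' direction, assume the system is ergodic; we must establish $L_S f = Tf$ for all $f \in L^1(T)$. First I would reduce to $f \in E$: given $f \in L^1(T)$, write $f = f^+ - f^-$ and approximate each part by an increasing net (or sequence, working in a suitable band) in $E_+$; then invoke order continuity of both $T$ and the extended $S$ (Theorem \ref{Ext}) together with the order-continuity properties of $L_S$ from \cite{L-33}, so that it suffices to prove the identity on $E$. Next, for $f \in E$, decompose $f = f_1 + f_2$ where $f_1 \in R(T)$ and $f_2$ lies in the kernel-type complement. Actually the cleaner route is: for $f \in E$, consider $g := f - Tf$. By Lemma \ref{ST} we have $S h = h$ for all $h \in R(T)$, so in particular $S(Tf) = Tf$ and hence $L_S(Tf) = Tf = T(Tf)$. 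Since $L_S$ and $T$ are both linear (and additive on their domains), it remains to show $L_S g = Tg = T(f - Tf) = Tf - Tf = 0$, i.e. that the Ces\`aro averages of $g = f - Tf$ converge in order to $0$.

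For this final step I would appeal to the Riesz-space Birkhoff ergodic theorem from \cite{L-33}, which guarantees that $L_S g$ exists (in the $T$-universally complete setting, which is why we work in $L^1(T)$) and satisfies $S(L_S g) = L_S g$ together with $T(L_S g) = Tg$. Since $T g = 0$, we get $T(L_S g) = 0$; but $L_S g$ is $S$-invariant, so by ergodicity $L_S g = T(L_S g) = 0$. Combining, $L_S f = L_S(Tf) + L_S g = Tf + 0 = Tf$, as required.

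The main obstacle I anticipate is the reduction from $L^1(T)$ to $E$ and the justification that $L_S f$ exists for every $f \in L^1(T)$: one must check that the extended operator $S$ on $L^1(T)$ still satisfies the hypotheses of the ergodic theorem in \cite{L-33} (order continuity, $TS = T$, the averaging/contraction behaviour) and that $L^1(T)$ is $T$-universally complete so the order limit is available. The algebraic identities $T(L_S g) = Tg$ and $S(L_S g) = L_S g$ are standard consequences of $TS = T$, $Se = e$, and order continuity, but I would state them carefully, and the splitting $f = Tf + (f - Tf)$ together with Lemma \ref{ST} is what makes the argument short.
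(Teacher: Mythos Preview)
Your proof is correct and rests on the same key mechanism as the paper's: the Riesz-space Birkhoff theorem from \cite{L-33} yields $S(L_Sf)=L_Sf$ and $T(L_Sf)=Tf$, and ergodicity then forces $L_Sf=T(L_Sf)=Tf$. The paper, however, applies this reasoning directly to $f\in L^{1}(T)$ in three lines, with no reduction to $E$ and no decomposition $f=Tf+(f-Tf)$: since $L^{1}(T)$ is $T$-universally complete and the extended $S$ from Theorem~\ref{Ext} still satisfies $TS=T$ and $Se=e$, \cite[Theorem 3.9]{L-33} applies to $f$ itself. Thus your appeal to Lemma~\ref{ST} and the splitting into $Tf$ and $g=f-Tf$ are unnecessary detours---the very argument you give for $g$ (namely $L_Sg$ is $S$-invariant, hence equals $T(L_Sg)=Tg$) works verbatim for $f$ and yields $L_Sf=Tf$ in one stroke. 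Your converse direction is identical to the paper's.
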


\begin{proof}
Suppose first that the system is ergodic and let $f\in L^{1}(T)$. As
$L^{1}(T)$ is $T$-universally complete, we have $SL_{S}f=L_{S}f$ and
$TL_{S}f=Tf$ (see \cite[Theorem 3.9]{L-33}). But since the system is ergodic,
$TL_{S}f=L_{S}f$. It follows that $L_{S}f=Tf$. Conversely assume that
condition (\ref{L1-C}) holds and let $f\in E$ satisfying $Sf=f$. Thus
$Tf=L_{S}f=f$ which shows that the system is ergodic and concludes the proof.
\end{proof}

\section{Some applications}

As application of previous results we will show that $S$ is an isometry from
$L^{p}\left(  T\right)  $ to itself when $L^{p}\left(  T\right)  $ is equipped
with its vector--valued norm $\left\Vert .\right\Vert _{p,T}.$

\begin{theorem}
\label{LL}\textit{Let} $(E,e,T,S)$ \textit{be a conditional expectation
preserving system and let }$q\in\lbrack1,\infty).$\textit{ Then }$L^{q}\left(
T\right)  $ is $S$-invariant. Moreover%
\begin{equation}
\left\Vert Sx\right\Vert _{T,q}=\left\Vert x\right\Vert _{T,q}\text{ for all
}x\in L^{q}\left(  T\right)  . \label{E2}%
\end{equation}

\end{theorem}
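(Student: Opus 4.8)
The plan is to reduce the statement to facts already established, most importantly Theorem~\ref{Ext} (that $S$ extends to an order continuous Riesz homomorphism on $L^1(T)$ with $TS=T$) together with the averaging property of $T$. First I would recall that the $R(T)$-valued norm on $L^q(T)$ is defined by $\|x\|_{T,q}:=\left(T|x|^q\right)^{1/q}$, the $q$-th root being taken in the $f$-algebra $L^1(T)$ (or in $R(T)$), and that $L^q(T)=\{x\in L^1(T):|x|^q\in L^1(T)\}$. So the first task is to show $S$ maps $L^q(T)$ into itself; for this I would take $x\in L^q(T)$, note that $S$ is a Riesz homomorphism, hence $|Sx|=S|x|$, and that $S$ respects the $f$-algebra operations on the relevant sublattice, so $|Sx|^q=(S|x|)^q=S(|x|^q)$. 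Since $|x|^q\in L^1(T)$ and $S$ preserves $L^1(T)$ by Theorem~\ref{Ext}, we get $|Sx|^q\in L^1(T)$, i.e. $Sx\in L^q(T)$; this proves $S$-invariance of $L^q(T)$.

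For the isometry equality \eqref{E2}, the key identity is $T|Sx|^q = TS(|x|^q) = T(|x|^q)$, where the first equality is the computation just above and the second is the relation $TS=T$ from Theorem~\ref{Ext}. Taking $q$-th roots in $R(T)$ gives $\|Sx\|_{T,q}=(T|Sx|^q)^{1/q}=(T|x|^q)^{1/q}=\|x\|_{T,q}$, which is exactly \eqref{E2}. So the whole argument is essentially: Riesz homomorphism property turns $|Sx|^q$ into $S(|x|^q)$, then $TS=T$ finishes it.

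The main obstacle — and the step deserving the most care — is justifying the power identity $S(y^q)=(Sy)^q$ for $y\in L^1(T)_+$ and $q\in[1,\infty)$, since $q$ need not be an integer, so $y^q$ is defined via the functional calculus / $f$-algebra structure on $L^1(T)$ rather than by iterated multiplication. I would handle this by first establishing it for $y\in E_+$: for rational (or integer) $q$ it follows from $S$ being multiplicative on $E$ (a consequence of $S$ being a Riesz homomorphism, using the Yosida-type representation or the fact that a Riesz homomorphism between $f$-algebras with the same unit preserves products), and then for general real $q\ge 1$ by approximating $y^q$ through order-continuous functional calculus and invoking order continuity of $S$. Passing from $E_+$ to $L^1(T)_+$ is then done by the now-standard approximation $y_\alpha\uparrow y$ with $y_\alpha\in E_+$, using that $S$ (as extended in Theorem~\ref{Ext}) and the functional calculus are both order continuous, so $S(y_\alpha^q)\uparrow S(y^q)$ and $(Sy_\alpha)^q\uparrow (Sy)^q$ coincide in the limit. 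Once this identity is in hand, the theorem follows immediately as above; I would also remark that taking $q$-th roots commutes with $T$ only in the sense that $(T z)^{1/q}$ is the definition of the norm, so no extra commutation lemma is needed beyond what is already standard for $L^q(T)$.
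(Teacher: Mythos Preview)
Your proposal is correct, and the overall architecture---reduce to $T(S|x|)^{q}=T|x|^{q}$ and then invoke $TS=T$---is exactly the paper's. The difference lies in how the key identity $(Sy)^{q}=S(y^{q})$ is obtained. The paper never appeals to $S$ being an $f$-algebra homomorphism or to the functional calculus: it checks the identity by hand on components (where $p^{q}=p$ and $(Sp)^{q}=Sp$), then on $e$-step functions via the explicit formula $(\sum\lambda_k p_k)^q=\sum\lambda_k^q p_k$ for disjoint $p_k$, and finally passes to general $x\in L^{q}(T)_{+}$ by Freudenthal approximation $x_k\uparrow x$ together with $x_k^{q}\uparrow x^{q}$ and order continuity of $S$ and $T$. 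Your route, by contrast, packages all of this into the single statement that a unital Riesz homomorphism is automatically multiplicative and commutes with the continuous functional calculus, so $(Sy)^{q}=S(y^{q})$ holds for every real $q\ge 1$ in one stroke. Your argument is cleaner and more conceptual; the paper's is more self-contained, since it avoids importing the multiplicativity of Riesz homomorphisms and the order-continuity of real powers as black boxes. Either way, once $(Sy)^{q}=S(y^{q})$ is in hand the proof is a one-liner.
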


\begin{proof}
(i) $q<\infty.$ Assume first that $x$ is a component of $e$, then $Sx$ is also
a component of $e$ and then $Sx=\left(  Sx\right)  ^{q}$ and $x^{q}=x.$ Hence
equality \ref{E2} holds for $x.$. Now, let $x=%
{\textstyle\sum\limits_{k}}
\alpha_{k}p_{k}$ be a finite linear combination of disjoint components with
$\lambda_{k}\geq0.$ Then $\left(  Sp_{k}\right)  $ are disjoint components of
$e$. Hence%
\[
\left(  Sx\right)  ^{q}=(%
{\textstyle\sum\limits_{k}}
\lambda_{k}Sp_{k})^{q}=%
{\textstyle\sum\limits_{k}}
\lambda_{k}^{q}Sp_{k}.
\]
It follows that%
\[
T\left(  Sx\right)  ^{q}=%
{\textstyle\sum\limits_{k}}
\lambda_{k}^{q}TSp_{k}=%
{\textstyle\sum\limits_{k}}
\lambda_{k}^{q}Tp_{k}=T\left(
{\textstyle\sum\limits_{k}}
\lambda_{k}^{q}p_{k}\right)  =Tx^{q}.
\]
Now let $x\in L^{q}\left(  T\right)  .$ We have to show that $T\left\vert
x\right\vert ^{q}=T\left\vert Sx\right\vert ^{q}=T\left(  \left(  S\left\vert
x\right\vert \right)  ^{q}\right)  .$ We can assume without loss of generality
that $x\in L^{q}\left(  T\right)  ^{+}.$ By Freudenthal Theorem there exists a
sequence $\left(  x_{k}\right)  $ of e-step functions such that $x_{k}\uparrow
x.$ We infer from \cite[Theorem 4.6]{L-06} that $x_{k}^{q}\uparrow x^{q}.$
Using order continuity of $S$ and the prior case we conclude that
$TSx^{q}=Tx^{q},$ as per our requirement.

(ii) $q=\infty.$ Let $x\in L^{\infty}\left(  T\right)  .$ Then $x\in
L^{q}\left(  T\right)  $ for all $q\in\left(  1,\infty\right)  .$ If follows
from the first case that $\left\Vert x\right\Vert _{T,q}=\left\Vert
Sx\right\Vert _{q,T}.$ According to \cite[Theorem 3.13]{L-180} we deduce that
$\left\Vert x\right\Vert _{\infty,T}=\left\Vert Sx\right\Vert _{\infty,T}.$
\end{proof}

As a consequence of Theorem \ref{L1} we give multiple characterizations of
ergodicity. The equivalence between (i) and (iii) below is already established
in \cite{L-724}. For the classical case, the reader can consult \cite[Theorem
8.10]{b-2982}.

\begin{theorem}
Let $(E,T,S,e)$ be a conditional expectation preserving system. Then the
following statements are equivalent.

\begin{enumerate}
\item[(i)] The system $(E,T,S,e)$ is ergodic;

\item[(ii)] $\lim\limits_{n\longrightarrow\infty}\dfrac{1}{n}%
{\textstyle\sum\limits_{k=0}^{n-1}}
T\left(  fS^{k}g\right)  =TfTg$ for all $f\in L^{\infty}\left(  T\right)  $
and $g\in L^{1}\left(  T\right)  .$

\item[(iii)] $\lim\limits_{n\longrightarrow\infty}\dfrac{1}{n}%
{\textstyle\sum\limits_{k=0}^{n-1}}
T\left(  fS^{k}g\right)  =TfTg$ for all $f,g\in E_{e}.$

\item[(iv)] $\lim\limits_{n\longrightarrow\infty}\dfrac{1}{n}%
{\textstyle\sum\limits_{k=0}^{n-1}}
T\left(  fS^{k}g\right)  =TfTg$ for all components $f,g$ of $e;$

\item[(v)] $\lim\limits_{n\longrightarrow\infty}\dfrac{1}{n}\sum
\limits_{k=0}^{n-1}T(fS^{k}f)=(Tf)^{2}$ \textit{for all} $f\in E_{e}$.

\item[(vi)] $\lim\limits_{n\longrightarrow\infty}\dfrac{1}{n}%
{\textstyle\sum\limits_{k=0}^{n-1}}
T\left(  fS^{k}f\right)  =\left(  Tf\right)  ^{2}$ for all component $f$ of
$e.$
\end{enumerate}
\end{theorem}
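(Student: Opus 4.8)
The plan is to prove the chain of implications $(i)\Rightarrow(ii)\Rightarrow(iii)\Rightarrow(iv)\Rightarrow(vi)\Rightarrow(i)$, together with the trivial $(iii)\Rightarrow(v)\Rightarrow(vi)$, so that all six statements become equivalent. The main engine is Theorem~\ref{L1}: ergodicity is equivalent to $L_S g = Tg$ for every $g\in L^1(T)$. Combining this with the averaging property $T(fh)=fTh$ valid when $f\in R(T)$, and with the order-continuity of $T$ on the appropriate domains, should convert the Cesàro statement about $\frac1n\sum T(fS^k g)$ into a statement about $T(fL_S g)$.

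For $(i)\Rightarrow(ii)$: fix $f\in L^\infty(T)$ and $g\in L^1(T)$. I would first check that $fS^k g\in L^1(T)$ so that $T$ applies, and that $\frac1n\sum_{k=0}^{n-1} fS^k g = f\,S_n g \to f L_S g$ in order (here $f\in L^\infty(T)$ multiplies an order-convergent sequence, and $L^1(T)$ is $T$-universally complete so $L_S g$ exists by \cite[Theorem 3.9]{L-33} / \cite[Theorem 3.13]{L-33}). Applying $T$ and using its order continuity on $L^1(T)$ gives $\frac1n\sum T(fS^k g)\to T(f L_S g)$. By Theorem~\ref{L1}, $L_S g = Tg\in R(T)$, so $T(f\,Tg) = Tg\cdot Tf$ by the averaging property (with the roles arranged so that the $R(T)$-valued factor comes out). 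That yields exactly $(ii)$. The implications $(ii)\Rightarrow(iii)\Rightarrow(iv)$ and $(iii)\Rightarrow(v)\Rightarrow(vi)$ are just restrictions to smaller classes of test elements: $E_e\subseteq L^\infty(T)$, components of $e$ lie in $E_e$, and setting $g=f$. I should remark that $E_e$ (the principal ideal generated by $e$) sits inside $L^\infty(T)$ since every $f\in E_e$ satisfies $|f|\le \lambda e$ for some real $\lambda$, hence $|f|\le \lambda e\in R(T)$.

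The substantive direction is the closing implication $(vi)\Rightarrow(i)$, and this is where the real work lies. By Lemma~\ref{LC} it suffices to show $Tp=p$ for every component $p$ of $e$ with $Sp=p$. Given such a $p$, apply $(vi)$ with $f=p$: since $Sp=p$ we get $S^k p = p$ for all $k$, so $S^k f\cdot f = p\cdot p = p$, hence $\frac1n\sum_{k=0}^{n-1} T(fS^k f) = Tp$ for every $n$, and the limit is therefore $Tp$. On the other hand $(vi)$ asserts this limit equals $(Tf)^2 = (Tp)^2$. Thus $Tp = (Tp)^2$, i.e. $Tp$ is a component of $e$. Now invoke Lemma~\ref{TpC}: a component $p$ of $e$ whose image $Tp$ is again a component of $e$ must satisfy $Tp=p$. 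Hence the system is ergodic.

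I expect the only genuine obstacle to be the justification of the order-limit interchange in $(i)\Rightarrow(ii)$ — namely that $f S_n g \to f L_S g$ in order for $f\in L^\infty(T)$, $g\in L^1(T)$, and that $T$ may be passed through this order limit. This rests on the $T$-universal completeness of $L^1(T)$ (so that $L_S g$ exists as an order limit, via the Riesz-space Birkhoff theorem of \cite{L-33}), on $L^\infty(T)$ acting as a multiplier algebra on $L^1(T)$, and on the order continuity of the extended $T$ established in \cite{L-24}. Everything else is bookkeeping: the averaging identity $T(fh)=(Tf)h$ for $f\in R(T)$, the inclusion $E_e\subseteq L^\infty(T)$, and the two elementary Lemmas~\ref{LC} and~\ref{TpC} already in hand.
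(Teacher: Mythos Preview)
Your proof is correct and shares the same two substantive steps with the paper: $(i)\Rightarrow(ii)$ via Theorem~\ref{L1} (the paper phrases the limit interchange through the H\"older-type bound $\lvert T(fS_n g)-Tf\,Tg\rvert\le\lVert f\rVert_{T,\infty}\lVert S_n g-Tg\rVert_{T,1}$ from \cite[Theorem~3.7]{L-180}, whereas you argue directly by order continuity of multiplication in the ambient $f$-algebra; both are fine), and $(vi)\Rightarrow(i)$ via Lemmas~\ref{TpC} and~\ref{LC}, which you reproduce verbatim.

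The one genuine difference is the routing of the intermediate implications. The paper proves $(iv)\Rightarrow(v)$ by a nontrivial approximation argument: first for $e$-step functions, then for general $f\in E_e$ via a uniform $\varepsilon_k$-approximation with careful bookkeeping of the cross terms. Your route simply reads $(v)$ off from $(iii)$ by setting $g=f$, and closes the cycle through the trivial $(iv)\Rightarrow(vi)$. This is a legitimate economy: once $(iii)$ is in the equivalence class, $(v)$ follows for free, and the paper's $(iv)\Rightarrow(v)$ step is logically redundant. What the paper's argument does buy is a direct demonstration that the component condition $(iv)$ alone forces the full $E_e$-statement $(v)$, without passing back up through ergodicity; your route establishes the same equivalence but via the longer detour $(iv)\Rightarrow(vi)\Rightarrow(i)\Rightarrow(ii)\Rightarrow(iii)\Rightarrow(v)$.
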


\begin{proof}
(i) $\Rightarrow$ (ii) We have by \cite[Theorem 3.7]{L-180},%
\[
\left\vert T\left(  fS_{n}g\right)  -TfTg\right\vert =\left\vert Tf\left(
S_{n}g-Tg\right)  \right\vert \leq\left\Vert f\right\Vert _{T,\infty
}\left\Vert S_{n}g-Tg\right\Vert _{T,1}%
\]
By Theorem \ref{L1}, $S_{n}g\overset{o}{\longrightarrow}Tg,$ so by order
continuity of $T,\left\Vert S_{n}g-Tg\right\Vert _{T,1}\overset{o}%
{\longrightarrow}0$ and the result follows.

This is done in \cite[Theorem 3.5]{L-724}.

(ii) $\Longrightarrow$ (iii) $\Rightarrow$ (vi) and (v) $\Rightarrow$ (vi) are obvious.

(iv) $\Longrightarrow$ (v) We have to show that $TfS_{n}f\overset
{o}{\longrightarrow}\left(  Tf\right)  ^{2}$ if $f\in E_{e}.$ This is true by
assumption for components. Assume now that $f=\sum_{k}\lambda_{k}p_{k}$ is an
e-step functions, where $\lambda_{k}$ are reals and $p_{k}$ are components. We
have
\[
TfS_{n}f=%
{\textstyle\sum\limits_{j;k}}
\lambda_{k}\lambda_{j}T\left(  p_{j}S_{n}p_{k}\right)  \overset{o}%
{\longrightarrow}%
{\textstyle\sum\limits_{j;k}}
\lambda_{k}\lambda_{j}Tp_{j}Tp_{k}\text{ as }n\longrightarrow\infty.
\]
by (iv), which proves the result as
\[%
{\textstyle\sum\limits_{j;k}}
\lambda_{k}\lambda_{j}Tp_{j}Tp_{k}=\left(
{\textstyle\sum\limits_{k}}
\lambda_{k}Tp_{j}\right)  ^{2}=\left(  Tf\right)  ^{2}.
\]
Assume now that $f\in E_{e}$ then there exist a sequence of e-steps functions
$\left(  f_{k}\right)  $ and a sequence $\left(  \varepsilon_{n}\right)  $ of
real numbers such that $\varepsilon_{k}\downarrow0$ and $\left\vert
f-f_{k}\right\vert \leq\varepsilon_{k}e.$ The sequence $\left(  f_{k}\right)
$ is order bounded in $E_{e},$ so $\left\vert f_{k}\right\vert \leq Me$ for
some real $M$ and then $T\left\vert f_{k}\right\vert \leq Me$ and $S\left\vert
f_{k}\right\vert \leq Me.$ We deduce that $\left\vert Sf\right\vert \leq Me,$
$T\left\vert f\right\vert \leq Me$ and%
\[
\left\vert \left(  Tf_{k}\right)  ^{2}-\left(  Tf\right)  ^{2}\right\vert
=\left\vert Tf_{k}-Tf\right\vert \left\vert Tf_{k}+Tf\right\vert
\leq2\varepsilon_{k}Me.
\]
Now, it follows from the decomposition%
\begin{align*}
T\left(  fS_{n}f\right)  -\left(  Tf\right)  ^{2}  &  =T\left[  \left(
f-f_{k}\right)  S_{n}f\right]  +T\left[  f_{k}S_{n}\left(  f-f_{k}\right)
\right] \\
&  +T\left[  f_{k}S_{n}f_{k}\right]  -\left(  Tf_{k}\right)  ^{2}+\left(
Tf_{k}\right)  ^{2}-\left(  Tf\right)  ^{2}.
\end{align*}
that
\begin{align*}
\left\vert TfSf-\left(  Tf\right)  ^{2}\right\vert  &  \leq2\varepsilon
_{k}Me+\left\vert T\left[  f_{k}S_{n}f_{k}\right]  -\left(  Tf_{k}\right)
^{2}\right\vert +2\varepsilon_{k}Me\\
&  =4\varepsilon_{k}Me+\left\vert T\left[  f_{k}S_{n}f_{k}\right]  -\left(
Tf_{k}\right)  ^{2}\right\vert .
\end{align*}
Taking the $\lim\sup$ over $n$ we get%
\[
\limsup\limits_{n}\left\vert T\left(  fS_{n}f\right)  -\left(  Tf\right)
^{2}\right\vert \leq4\varepsilon_{k}Me.
\]
As this happens for every $k$ we get $\limsup\limits_{n}\left\vert
TfSf-\left(  Tf\right)  ^{2}\right\vert =0$ and then $TfS_{n}f\overset
{o}{\longrightarrow}\left(  Tf\right)  ^{2}$ as required.

(vi) $\Longrightarrow$ (i) Let $p$ be a component such that $Sp=p.$ Then
$S_{n}p=p$ so (v) gives $Tp=\left(  Tp\right)  ^{2},$ that is $Tp$ is a
component. By Lemma \ref{TpC} $Tp=p.$ According to Lemma \ref{LC} the system
is ergodic. vv
\end{proof}

\begin{center}

\end{center}

\end{document}